\def\ddefloop#1{\ifx\ddefloop#1\else\ddef{#1}\expandafter\ddefloop\fi}
\def\ddef#1{\expandafter\def\csname c#1\endcsname{\ensuremath{\mathcal{#1}}}}
\def\ddef#1{\expandafter\def\csname v#1\endcsname{\ensuremath{\boldsymbol{#1}}}}
\def\ddef#1{\expandafter\def\csname v#1\endcsname{\ensuremath{\boldsymbol{\csname #1\endcsname}}}}
\def\ddef#1{\expandafter\def\csname sc#1\endcsname{\ensuremath{\mathscr{#1}}}}
\newcommand\T{{\ensuremath{\scriptscriptstyle{\top}}}} 
\newcommand\Span{\ensuremath{\operatorname{span}}} 
\newcommand{\R}{\ensuremath{\mathbb{R}}} 
\newcommand\EndProofParagraph{\hfill\qed}
\newtheorem{lemma}{Lemma}
\newtheorem{theorem}{Theorem}
\title{Parameter identification in Markov chain choice models}
\author{Arushi Gupta}
\author{Daniel Hsu}
\affil{Columbia University, New York, NY}
\begin{document}

\maketitle

\begin{abstract}%
  This work studies the parameter identification problem for the Markov chain choice model of Blanchet, Gallego, and Goyal used in assortment planning.  In this model, the product selected by a customer is determined by a Markov chain over the products, where the products in the offered assortment are absorbing states.  The underlying parameters of the model were previously shown to be identifiable from the choice probabilities for the all-products assortment, together with choice probabilities for assortments of all-but-one products.  Obtaining and estimating choice probabilities for such large assortments is not desirable in many settings.  The main result of this work is that the parameters may be identified from assortments of sizes two and three, regardless of the total number of products.  The result is obtained via a simple and efficient parameter recovery algorithm.

\end{abstract}

\section{Introduction}

In assortment planning, the seller's goal is to select a subset of products
(called an \emph{assortment}) to offer to a customer so as to maximize the
expected revenue.
This task can be formulated as an optimization problem given the revenue
generated from selling each product, along with a probabilistic model of the
customer's preferences for the products.
Such a \emph{discrete choice model} must capture the customer's
\emph{substitution behavior} when, for instance, the offered assortment does not
contain the customer's most preferred product.

Our focus in this paper is the Markov chain choice model (MCCM) proposed by
\citet{blanchet2016markov}.
In this model, the product selected by the customer is determined by a Markov
chain over products where the products in the offered assortment are absorbing
states.
The current state represents the desired product; if that product is not
offered, the customer transitions to another product according to the Markov
chain probabilities, and the process continues until the desired product is
offered or the customer leaves.
MCCM generalizes widely-used discrete choice models such as the multinomial
logit model~\citep{luce2005individual,plackett1975analysis}, as well as other
generalized attraction models~\citep{gallego2014general}; it also
well-approximates other random utility models found in the literature such as
mixed multinomial logit models~\citep{mcfadden2000mixed}.
At the same time, the MCCM permits computationally efficient unconstrained
assortment optimization as well as efficient approximation algorithms in the
constrained case~\citep{blanchet2016markov,desir2015capacity}; this stands in
contrast to some richer models such as mixed multinomial logit
models~\citep{rusmevichientong2010assortment} and the nested logit
model~\citep{davis2014assortment} for which assortment optimization is generally
intractable.
This combination of expressiveness and computational tractability makes MCCM
very attractive for use in assortment planning.

A crucial step in this overall enterprise---e.g., before assortment optimization
may take place---is the estimation of the choice model's parameters from
observational data.
Parameter estimation for MCCM is only briefly considered in the original work of
\citet{blanchet2016markov}.
In that work, it is shown that the parameters can be determined from the choice
probabilities for the all-products assortment, together with the assortments
comprised of all-but-one product.
This is not satisfactory because it may be unrealistic or unprofitable to offer
assortments of such large cardinality.
Therefore, it is desirable to be able to determine the parameters from choice
probabilities for smaller cardinality assortments.
We note that this is indeed possible for simpler choice models such as
the multinomial logit model~\citep[see, e.g.,][]{train2009discrete}, but these
simpler models are limited in expressiveness---for example, they cannot express
heterogeneous substitution behavior.

In this paper, we show that the MCCM parameters can be identified from the
choice probabilities for assortments of sizes as small as two and three,
independent of the total number of products.\footnote{%
  We focus on identifiability because estimation of choice probabilities from
  observational data is fairly straightforward, especially when the assortments
  have small cardinality.
  However, this issue is revisited in Section~\ref{sec:discussion} in the context of sample complexity.
}
We also give a simple and efficient algorithm for reconstructing the parameters
from these choice probabilities.

\section{Model and notation}

In this section, we describe the Markov chain choice model (MCCM) of
\citet{blanchet2016markov}, along with notations used for choice probabilities
and model parameters.

The set of $n$ products in the system is denoted by $\cN := \{1,2,\dotsc,n\}$.
The ``no purchase'' option is denoted by product $0$.
Upon offering an assortment $S \subseteq \cN$, the set of possible outcomes is
$S_+ := S \cup \{0\}$: either some product in $S$ is purchased, or no
product is purchased.

Underlying the MCCM is a Markov chain with state space $\cN_+$.
The (true) parameters of the model are the initial state probabilities $\vlambda
= (\lambda_i)_{i \in \cN_+}$ and the transition probabilities $\vrho =
(\rho_{i,j})_{(i,j) \in \cN_+ \times \cN_+}$ (a row stochastic matrix).
The transition probabilities satisfy the following properties:
\begin{enumerate}
  \item
    $\rho_{0,0} = 1$ and $\rho_{0,j} = 0$ for $j \in \cN$ (i.e., the ``no
    purchase'' state is absorbing).

  \item
    $\rho_{i,i} = 0$ for $i \in \cN$ (i.e., no self-loops in product states).

  \item
    The submatrix $\tilde\vrho := (\rho_{i,j})_{(i,j) \in \cN \times \cN}$ is
    irreducible.

\end{enumerate}
We use $\vrho_i = (\rho_{i,j})_{j \in \cN_+}$ to denote the $i$-th row of
$\vrho$.

In MCCM, the customer arrives at a random initial state $X_1$ chosen according
to $\vlambda$.
At time $t = 1,2,\dotsc$:
\begin{itemize}
  \item
    If $X_t = 0$, the customer leaves the system without purchasing a product.

  \item
    If the product $X_t$ is offered (i.e., $X_t \in S$), the customer purchases
    $X_t$ and leaves.

  \item
    If the product $X_t$ is not offered (i.e., $X_t \notin S$), the customer
    transitions to a new random state $X_{t+1}$ chosen according to
    $\vrho_{X_t}$ and the process continues in time step $t+1$ as if the
    customer had initially arrived at $X_{t+1}$.

\end{itemize}
Another way to describe this process is that the Markov chain distribution is
temporarily modified so that the states $S_+$ are absorbing, and the customer
purchases the product upon reaching such a state (or makes no purchase if the
state is $0$).
The irreducibility of $\tilde\vrho$ ensures that the customer eventually leaves
the system (i.e., an absorbing state is reached).
Note that only the identity of the final (absorbing) state is observed, as it
corresponds to either a purchase or non-purchase.
The $(X_t)_{t=1,2,\dotsc,}$ themselves do not correspond to observable customer
behavior, and hence the model parameters $\vlambda$ and $\vrho$ cannot be
directly estimated.

The choice probabilities are denoted by $\pi(j,S)$ for $S \subseteq \cN$ and $j
\in S_+$: this is the probability that $j$ is the final state in the
aforementioned process.
\citet{blanchet2016markov} relate the choice probabilities and the parameters
$\vlambda$ and $\vrho$ as follows:
\begin{align}
  \lambda_j
  & \ = \
  \pi(j,\cN) \,,
  & \rho_{i,j}
  & \ = \
  \begin{cases}
    1
    & \text{if $i = 0$ and $j = 0$} \,, \\
    \displaystyle
    \frac{\pi(j,\cN \setminus \{i\}) - \pi(j,\cN)}{\pi(i,\cN)}
    & \text{if $i \in \cN$, $j \in \cN_+$, and $i \neq j$} \,, \\
    0
    & \text{otherwise} \,.
  \end{cases}
  \label{eq:ident}
\end{align}
The relations in Equation~\eqref{eq:ident} show that the parameters may be
identified from choice probabilities for the assortments $S = \cN$ and $S = \cN
\setminus \{i\}$ for $i \in \cN$.
These choice probabilities may be directly estimated from observations upon
offering such assortments to customers.

\section{Main result}

The following theorem establishes identifiability of the MCCM parameters from
choice probabilities for assortments of sizes as small as two and three.
\begin{theorem}
  \label{thm:main}
  There is an efficient algorithm that, for any $r \in \{2,3,\dotsc,n-1\}$,
  when given as input the choice probabilities $(\pi(j,S))_{j \in S_+}$ for
  all assortments $S \subseteq \cN$ of cardinality $r$ and $r+1$ for a Markov
  chain choice model, returns the parameters $\vlambda$ and $\vrho$ of the
  model.
\end{theorem}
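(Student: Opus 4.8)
The plan is to reduce the theorem to recovering $\vlambda$ and $\vrho$, and to do so by first extracting, for every assortment $S$ with $|S|=r$, the \emph{single-source} absorption probabilities
\[
g_{i,j}^S \ := \ \Pr{}_{\!i}\!\left[\text{the walk is absorbed at }j\text{ when $S_+$ is made absorbing}\right], \qquad i\notin S,\ j\in S_+,
\]
and then expressing $\vlambda$ and each row $\vrho_i$ as the solution of a linear system in these quantities. The starting point is a first-passage identity: conditioning on whether the $\vrho$-walk with $S_+$ absorbing ever reaches $i$ before being absorbed gives, for $i\notin S$ and $j\in S_+$,
\[
\pi(j,S) \ = \ \pi(j,S\cup\{i\}) \ + \ \pi(i,S\cup\{i\})\,g_{i,j}^S ,
\]
which for $S=\cN\setminus\{i\}$ reduces to Equation~\eqref{eq:ident} (since then $g_{i,j}^S=\rho_{i,j}$ and $\pi(i,\cN)=\lambda_i$). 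Because $\pi(i,S\cup\{i\})\ge\lambda_i>0$ — positivity of $\lambda_i$ being already required to divide by $\pi(i,\cN)$ in~\eqref{eq:ident} — this identity lets me solve for every $g_{i,j}^S$ with $|S|=r$ using only the choice probabilities for assortments of sizes $r$ and $r+1$, i.e.\ exactly the given input.

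Next I would recast the recovery as linear algebra through the general first-passage identity: for $A\subseteq B$, $k\notin B$, and $j\in A_+$,
\[
g_{k,j}^A \ = \ g_{k,j}^B \ + \ \sum_{b\in B\setminus A} g_{k,b}^B\, g_{b,j}^A ,
\]
obtained by conditioning on the first visit to $B\setminus A$. Specializing to $A=S$ (with $|S|=r$, $i\notin S$) and $B=\cN\setminus\{i\}$, and using $g_{i,\cdot}^{\cN\setminus\{i\}}=\vrho_i$ (one step suffices when everything but $i$ is absorbing), yields for each fixed $i$ the linear system
\[
g_{i,j}^S \ = \ \rho_{i,j} \ + \ \sum_{b\notin S\cup\{i\}} \rho_{i,b}\, g_{b,j}^S , \qquad |S|=r,\ i\notin S,\ j\in S_+,
\]
whose unknowns are the $n$ entries of $\vrho_i$ and whose coefficients are the already-computed size-$r$ quantities $g_{b,j}^S$. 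Averaging the same identity against $\vlambda$ with $B=\cN$ (so that $g_{k,j}^{\cN}=\mathbf{1}[k=j]$) gives the companion system
\[
\pi(j,S) \ = \ \lambda_j \ + \ \sum_{b\notin S} \lambda_b\, g_{b,j}^S , \qquad |S|=r,\ j\in S_+,
\]
for the unknown $\vlambda$. Solving these $n+1$ linear systems recovers all parameters, and since each has size polynomial in the number of offered assortments, the algorithm runs in time polynomial in the input size.

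The main obstacle is to prove that these systems are nonsingular: the true parameters form one solution, so what the algorithm needs is \emph{uniqueness}. This is where I expect the irreducibility of $\tilde\vrho$ to be essential. Concretely, I would show the homogeneous system has only the trivial solution, i.e.\ that any signed vector $\vdelta$ with $\sum_{k\ne i}\delta_k\,g_{k,j}^S=0$ for all size-$r$ sets $S\not\ni i$ and all $j\in S_+$ must vanish. The clean model case arises if one instead builds the absorption probabilities up one product at a time, solving for $g_{i,\cdot}^B$ from the sets $A_b:=B\setminus\{b\}$: there the homogeneous equations reduce, for each pair of distinct $b,j\in B$, to $\bigl(1-g_{b,j}^{A_b}g_{j,b}^{A_j}\bigr)\,\zeta_j=0$, and irreducibility forces the ``mutual absorption'' product $g_{b,j}^{A_b}g_{j,b}^{A_j}<1$ (were both factors equal to $1$, the walk would shuttle between $b$ and $j$ forever without ever reaching any other absorbing product, contradicting reachability), whence $\zeta_j=0$. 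The crux of the full argument is to combine this irreducibility-driven strict inequality with the fact that, as $S$ ranges over all size-$r$ sets, the coefficients $g_{b,j}^S$ probe every product, so as to force $\vdelta=0$ directly in the two systems above. Establishing this non-degeneracy is the step I expect to demand the most care.
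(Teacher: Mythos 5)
You have correctly reconstructed the paper's algorithm: your $g^S_{i,j}$ are exactly the conditional choice probabilities $\pi(j,S \mid i)$ of Equation~\eqref{eq:condprob-defn}, your first-passage identity for extracting them from sizes $r$ and $r+1$ is Lemma~\ref{lem:ident}, and your two linear systems are precisely Equations~\eqref{eq:linsys-rho} and~\eqref{eq:linsys-lambda} (after forcing $\hat\rho_{i,i}=0$ and using that $\pi(j,S\mid k)$ is $0$ or $1$ for $k \in S_+$). But there is a genuine gap: the entire mathematical content of Theorem~\ref{thm:main} is the step you explicitly defer at the end, namely that the true parameters are the \emph{unique} solutions of these systems. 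That they satisfy the equations is immediate; the paper itself flags uniqueness as ``the main content of the proof.'' Your sketched route to it does not go through as stated: the ``clean model case'' of solving for $g^B_{i,\cdot}$ from the sets $A_b = B \setminus \{b\}$ needs choice probabilities for assortments of sizes $|B|-1$ and $|B|$, which for $|B| > r+1$ are not in the input, so at best it is suggestive for $r = n-1$ (where Equation~\eqref{eq:ident} already suffices); and the pairwise mutual-absorption inequality $g^{A_b}_{b,j} g^{A_j}_{j,b} < 1$ gives no evident control on the rank of the size-$r$ systems, where a putative kernel vector $\vdelta$ is constrained by many overlapping equations across assortments rather than by two-state shuttling. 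You have, in effect, rederived Algorithm~\ref{alg:main} but not proved it correct.

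The missing idea in the paper is a dimension count on the coefficient vectors $\vh_{j,S} = (\pi(j,S \mid k))_k$, driven by M-matrix structure rather than by local strict inequalities. Two facts do the work: for fixed $S$ the vectors $\{\vh_{j,S} : j \in S\}$ contain an identity submatrix and so are independent (Lemma~\ref{lem:dim}); and each $\vh_{j,S}$ is orthogonal to every row $\va_i = \ve_i - \vrho_i$ of $\vA = \vI - \vrho$ with $i \notin S$ (Lemma~\ref{lem:perp}) --- this is exactly your recursion $g^S_{i,j} = \rho_{i,j} + \sum_{b \notin S \cup \{i\}} \rho_{i,b}\, g^S_{b,j}$ read as an orthogonality relation. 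Irreducibility of $\tilde\vrho$ then enters through Lemma~\ref{lem:m-matrix}: every proper principal submatrix of an irreducible M-matrix is nonsingular. Consequently the intersection of the spans coming from a family $\scS$ and a new assortment $S'$, being orthogonal to $\Span\{\va_i : i \notin S_0 \cap S'\}$ with $S_0 = \bigcup_{S \in \scS} S$, has dimension at most $\max\{1, |S_0 \cap S'|\}$ (Lemma~\ref{lem:dim-multiple}). Chaining the $n-r$ assortments $S_\cap \cup \{k\}$ through a common $(r-1)$-set $S_\cap \not\ni i$ (Lemma~\ref{lem:dim-common-intersect}) yields rank at least $n-1$ --- and at least $n$ in the general case with the no-purchase state, where the relevant irreducible M-matrix is $\vI - \tilde\vrho$ and each assortment contributes $r+1$ independent vectors --- which is what pins down $\vrho_i$ and $\vlambda$ uniquely. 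This orthogonal-complement/principal-submatrix argument is what your proposal would need to supply; without it, the claimed non-degeneracy of the linear systems remains an open assertion, not a proof.
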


The number of assortments for which the algorithm actually requires choice
probabilities is $O(n^2)$ when $r \leq n/2$, which is far fewer than
$\binom{n}{r} + \binom{n}{r+1}$, the total numbers of assortments of sizes $r$
and $r+1$.
The details of this bound are shown following the proof of
Theorem~\ref{thm:main}.
However, to simplify the presentation, we describe our parameter recovery
algorithm as using choice probabilities for all assortments of sizes $r$ and
$r+1$.

The main steps of our algorithm, shown as Algorithm~\ref{alg:main}, involve
setting up and then solving systems of linear equations that (as we will prove)
determine the unknown parameters $\vlambda$ and $(\vrho_i)_{i \in \cN}$.
(Note that $\vrho_0$ is already known.)
The coefficients of the linear equations are determined by the given choice
probabilities via \emph{conditional choice probabilities} $\pi(j,S \mid i)$ for
$S \subseteq \cN$ and $i,j \in \cN_+$, defined as follows:
\begin{align}
  \pi(j,S \mid i)
  & \ := \
  \Pr\del{
    \text{state $j$ is reached before any state in $S_+ \setminus \{j\}$}
    \mid \text{initial state is $i$}
  }
  \,.
  \label{eq:condprob-defn}
\end{align}

\begin{algorithm}[t]
  \renewcommand\algorithmicrequire{\textbf{input}}
  \renewcommand\algorithmicensure{\textbf{output}}
  \caption{Parameter recovery algorithm for Markov chain choice model}
  \label{alg:main}
  \begin{algorithmic}[1]
    \REQUIRE
    For some $r \in \{2,3,\dotsc,n-1\}$, choice probabilities $(\pi(j,S))_{j \in
    S_+}$ for all assortments $S \subseteq \cN$ of sizes $r$ and $r+1$.

    \ENSURE
    Parameters $\hat\vlambda$ and $\hat\vrho$.

    \FOR{$i \in \cN$}

      \STATE
      Solve the following system of linear equations
      for $\hat\vrho_i = (\hat\rho_{i,k})_{k \in \cN_+}$:
      \begin{align}
        \sum_{k \in \cN_+} \pi(j,S \mid k) \cdot \hat\rho_{i,k}
        & \ = \
        \pi(j,S \mid i)
        \qquad
        \text{for all $S \in \binom{\cN}{r}$ s.t.~$i \notin S$ and $j \in S_+$}
        \,,
        \label{eq:linsys-rho}
      \end{align}
      where $\binom{\cN}{r}$ denotes the family of subsets of $\cN$ of size $r$,
      and $\pi(j,S \mid k)$ is defined in Equation~\eqref{eq:condprob}.

    \ENDFOR

    \STATE
    Solve the following system of linear equations for $\hat\vlambda =
    (\hat\lambda_i)_{i \in \cN_+}$:
    \begin{align}
      \sum_{k \in \cN_+} \pi(j,S \mid k) \cdot \hat\lambda_k
      & \ = \
      \pi(j,S)
      \qquad
      \text{for all $S \in \binom{\cN}{r}$ and $j \in S_+$}
      \label{eq:linsys-lambda}
    \end{align}

    \RETURN
    $\hat\vlambda$ and $\hat\vrho$.

  \end{algorithmic}
\end{algorithm}

Note that the initial state in the MCCM is not observed, so these conditional
probabilities cannot be directly estimated.
Nevertheless, they can be indirectly estimated via the following relationship
between the conditional choice probabilities and the (unconditional) choice
probabilities.
\begin{lemma}
  \label{lem:ident}
  For any $S \subseteq \cN$ and $i,j \in S_+$,
  \begin{align}
    \pi(j,S \mid i)
    & \ = \
    \begin{cases}
      1
      & \text{if $i = j$} \,, \\
      \displaystyle
      \frac{\pi(j,S) - \pi(j,S \cup \{i\})}{\pi(i,S \cup \{i\})}
      & \text{if $i \in \cN \setminus S$} \,, \\
      0
      & \text{if $i \in S_+ \setminus \{j\}$} \,.
    \end{cases}
    \label{eq:condprob}
  \end{align}
\end{lemma}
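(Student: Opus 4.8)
The plan is to check the three cases of Equation~\eqref{eq:condprob} in turn, where the only substantive case is the middle one ($i \in \cN \setminus S$); the other two follow immediately from the definition~\eqref{eq:condprob-defn}. If $i = j$, then the initial state is already the target state $j$, which is therefore ``reached before any state in $S_+ \setminus \{j\}$'' at the very first time step, so $\pi(j, S \mid i) = 1$. If $i \in S_+ \setminus \{j\}$, then the initial state lies in $S_+ \setminus \{j\}$, so a state of $S_+ \setminus \{j\}$ is reached (at time $t = 1$) strictly before $j$ is, and the defining event is impossible, giving $\pi(j, S \mid i) = 0$.

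For the main case $i \in \cN \setminus S$, I would argue by a first-passage decomposition justified through the strong Markov property. Consider running the chain from the random initial state $X_1 \sim \vlambda$ but with the \emph{enlarged} absorbing set $(S \cup \{i\})_+ = S_+ \cup \{i\}$, and let $H$ denote the (random) absorbing state reached, so that $\Pr\del{H = k} = \pi(k, S \cup \{i\})$ for each $k \in S_+ \cup \{i\}$. The process that defines $\pi(j, S)$ uses the smaller absorbing set $S_+$, and since $i \notin S$ it agrees with the enlarged process up to the first hitting time $\tau$ of $S_+ \cup \{i\}$, differing only in that it continues past any visit to $i$.

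Consequently, the $S$-process is absorbed at $j$ in exactly one of two disjoint ways: either $H = j$, in which case both processes absorb at $j$; or $H = i$, in which case the $S$-process continues from $i$ and must subsequently be absorbed at $j$. By the strong Markov property at the stopping time $\tau$, the post-$\tau$ continuation on the event $\{H = i\}$ is a fresh run of the $S$-process started from $i$, which is absorbed at $j$ with probability $\pi(j, S \mid i)$. Adding the two contributions gives
\begin{align}
  \pi(j, S)
  \ = \
  \pi(j, S \cup \{i\}) + \pi(i, S \cup \{i\}) \cdot \pi(j, S \mid i) \,,
  \label{eq:decomp}
\end{align}
and solving Equation~\eqref{eq:decomp} for $\pi(j, S \mid i)$ yields the claimed middle case of Equation~\eqref{eq:condprob}. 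I expect the main obstacle to be the careful bookkeeping behind Equation~\eqref{eq:decomp}: one must verify that splitting the absorption-at-$j$ event according to the value of $H$ is both exhaustive and mutually exclusive, and that the strong Markov property legitimately replaces the evolution after $\tau$ on $\{H = i\}$ with an independent copy started from $i$. A minor loose end is the well-definedness of the division: the denominator satisfies $\pi(i, S \cup \{i\}) \ge \lambda_i > 0$, since starting at $i$ already witnesses reaching $i$ before $S_+$, and positivity of $\lambda_i = \pi(i, \cN)$ is the same condition already implicit in Equation~\eqref{eq:ident}.
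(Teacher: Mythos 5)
Your proof is correct and takes essentially the same route as the paper: after dispatching the cases $i = j$ and $i \in S_+ \setminus \{j\}$ by definition, the paper derives the identity $\pi(j,S) = \pi(j,S \cup \{i\}) + \pi(j,S \mid i) \cdot \pi(i,S\cup\{i\})$ by splitting the event that $j$ is reached before $S_+ \setminus \{j\}$ according to whether $i$ is reached before $S_+$ and invoking the Markov property, which is exactly your first-passage decomposition (your coupled enlarged-absorbing-set process and strong Markov property at $\tau$ are just a more explicit rendering of that step). Your closing observation that the denominator satisfies $\pi(i, S \cup \{i\}) \geq \lambda_i$, with positivity of $\lambda_i$ being the same assumption already implicit in Equation~\eqref{eq:ident}, is a point the paper leaves unstated.
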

\begin{proof}
  The cases where $i = j$ ($\Rightarrow \pi(j,S \mid i) = 1$) and $i \in S_+
  \setminus \{j\}$ ($\Rightarrow \pi(j,S \mid i) = 0$) are clear from the
  definition in Equation~\eqref{eq:condprob-defn}.
  It remains to handle the case where $i \in \cN \setminus S$.
  Fix such a product $i$, and observe that
  \begin{align*}
    \pi(j,S)
    & \ = \
    \Pr\del{
      \text{$j$ is reached before $S_+ \setminus \{j\}$}
    }
    \\
    & \ = \
    \Pr\del{
      \text{$j$ is reached before $S_+ \setminus \{j\}$}
      \wedge
      \text{$i$ is not reached before $S_+$}
    }
    \\
    & \qquad
    +
    \Pr\del{
      \text{$j$ is reached before $S_+ \setminus \{j\}$}
      \wedge
      \text{$i$ is reached before $S_+$}
    }
    \\
    & \ = \
    \Pr\del{
      \text{$j$ is reached before $(S_+ \cup \{i\}) \setminus \{j\}$}
    }
    \\
    & \qquad
    +
    \Pr\del{
      \text{$j$ is reached before $S_+ \setminus \{j\}$}
      \mid
      \text{$i$ is reached before $S_+$}
    }
    \\
    & \qquad\quad
    \cdot
    \Pr\del{
      \text{$i$ is reached before $S_+$}
    }
    \\
    & \ = \
    \Pr\del{
      \text{$j$ is reached before $(S \cup \{i\})_+ \setminus \{j\}$}
    }
    \\
    & \qquad
    +
    \Pr\del{
      \text{$j$ is reached before $S_+ \setminus \{j\}$}
      \mid
      \text{initial state is $i$}
    }
    \\
    & \qquad\quad
    \cdot
    \Pr\del{
      \text{$i$ is reached before $(S \cup \{i\})_+ \setminus \{i\}$}
    }
    \\
    & \ = \
    \pi(j,S \cup \{i\}) + \pi(j,S \mid i) \cdot \pi(i,S)
    \,.
  \end{align*}
  The penultimate step uses the Markov property and the case condition that $i
  \in \cN \setminus S$.
  Rearranging the equation gives the relation claimed by the lemma in this case.
\end{proof}

Lemma~\ref{lem:ident} shows that the conditional choice probabilities for
assortments $S$ of size $r$ can be determined from the unconditional choice
probabilities of assortments of size $r$ and $r+1$.
The systems of linear equations used in Algorithm~\ref{alg:main}
(Equations~\eqref{eq:linsys-rho} and~\eqref{eq:linsys-lambda}) are defined in
terms of these conditional choice probabilities and hence are ultimately defined
in terms of the unconditional choice probabilities provided as input to
Algorithm~\ref{alg:main}.

It is clear that the true MCCM parameters $\vlambda$ and $\vrho$ satisfy the
systems of linear equations in Equations~\eqref{eq:linsys-rho}
and~\eqref{eq:linsys-lambda}.
However, what needs to be proved is that they are uniquely determined by these
linear equations; this is the main content of the proof of
Theorem~\ref{thm:main}.

\section{Proof of Theorem~\ref{thm:main}}

In this section, we give the proof of Theorem~\ref{thm:main}.

\subsection{The case without the ``no purchase'' option}

For sake of clarity, we first give the proof in the case where the ``no
purchase'' option is absent.
This can be regarded as the special case where $\lambda_0 = 0$ and $\rho_{i,0} =
0$ for all $i \in \cN$.
So here we just regard $\vlambda = (\lambda_j)_{j \in \cN}$ and each $\vrho_i =
(\rho_{i,j})_{j \in \cN}$ as probability distributions on $\cN$.
The general case will easily follow from the same arguments with minor
modification.

\subsubsection{Proof strategy}

We make use of the following result about M-matrices, i.e., the class of
matrices $\vA$ that can be expressed as $\vA = s \vI - \vB$ for some $s>0$ and
non-negative matrix $\vB$ with spectral radius at most $s$.
(Here, $\vI$ denotes the identity matrix of appropriate dimensions.)
In particular, the matrix $\vI - \vrho$ is a (singular) M-matrix that is also
irreducible.
\begin{lemma}[See, e.g., Theorems 6.2.3 \& 6.4.16
  in~\citealp{berman1994nonnegative}]
  \label{lem:m-matrix}
  If $\vA \in \R^{p \times p}$ is an irreducible M-matrix (possibly singular),
  then every principal submatrix\footnote{%
    Recall that a \emph{principal submatrix} of a $p \times p$ matrix $\vA$ is a submatrix obtained by removing from $\vA$ the rows and columns indexed by some set $I \subseteq [p]$.%
    } of $\vA$, other than $\vA$ itself, is
  non-singular.
  If $\vA$ is also singular, then it has rank $p-1$.
\end{lemma}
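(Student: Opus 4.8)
The plan is to reduce both claims to the Perron--Frobenius theory of the non-negative matrix appearing in the M-matrix decomposition. Write $\vA = s\vI - \vB$ with $s > 0$, entrywise-non-negative $\vB \geq 0$, and spectral radius $\rho(\vB) \leq s$. Since adding $s\vI$ and negating leave the off-diagonal zero pattern unchanged, $\vA$ is irreducible if and only if $\vB$ is, so I may treat $\vB$ as an irreducible non-negative matrix. The two facts I will draw on are: (i) for such $\vB$, the spectral radius $\rho(\vB)$ is an algebraically simple eigenvalue of $\vB$; and (ii) strict monotonicity of the spectral radius, namely $\rho(\vB') < \rho(\vB)$ whenever $0 \leq \vB' \leq \vB$ entrywise and $\vB' \neq \vB$.

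For the first claim, fix a nonempty index set $I$ with $\emptyset \neq I$, let $J = [p] \setminus I$ be the retained indices, and consider the proper principal submatrix $\vA_J = s\vI - \vB_J$, where $\vB_J$ is the corresponding principal submatrix of $\vB$. Form $\vB'$ by zeroing out every row and column of $\vB$ indexed by $I$. Then $\vB'$ is non-negative with $\vB' \leq \vB$, and $\vB' \neq \vB$ because irreducibility of $\vB$ forces a nonzero entry in some deleted row or column. After a permutation, $\vB'$ is block-diagonal with blocks $\vB_J$ and a zero block, so its nonzero eigenvalues coincide with those of $\vB_J$ and hence $\rho(\vB_J) = \rho(\vB')$. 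By strict monotonicity (ii), $\rho(\vB_J) = \rho(\vB') < \rho(\vB) \leq s$. Thus every eigenvalue $\mu$ of $\vB_J$ obeys $|\mu| \leq \rho(\vB_J) < s$, so $s$ is not an eigenvalue of $\vB_J$, and therefore $\vA_J = s\vI - \vB_J$ is non-singular.

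For the second claim, suppose $\vA$ is singular, i.e., $s$ is an eigenvalue of $\vB$. Together with $\rho(\vB) \leq s$ and the bound $|\lambda| \leq \rho(\vB)$ for every eigenvalue $\lambda$ of $\vB$, this forces $s = \rho(\vB)$; that is, $s$ is exactly the Perron root. By simplicity (i), the eigenspace of $\vB$ for $\rho(\vB)$ is one-dimensional, and it equals $\ker(s\vI - \vB) = \ker \vA$. Hence $\dim \ker \vA = 1$, so $\operatorname{rank} \vA = p - 1$.

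The crux of the argument is the strict monotonicity (ii); the rest is bookkeeping. This strict inequality is precisely where irreducibility is indispensable---for a reducible $\vB$ a proper principal submatrix can retain the full spectral radius, and a reducible singular M-matrix may have a null space of dimension greater than one. Proving (ii) from scratch is the main work: it follows from the Perron--Frobenius theorem either by comparing the (positive) Perron eigenvectors of $\vB$ and $\vB'$, or via the Collatz--Wielandt characterization of the spectral radius together with the strict positivity of the Perron eigenvector of the larger matrix. Since both (i) and (ii) are standard consequences of Perron--Frobenius theory, the statement can simply be cited rather than reproved here.
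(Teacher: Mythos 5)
Your proof is correct, but there is nothing in the paper to compare it against: the paper does not prove this lemma at all, it imports it wholesale as a citation to Theorems 6.2.3 and 6.4.16 of \citet{berman1994nonnegative} and uses it as a black box in the proofs of Lemmas~\ref{lem:dim-multiple} and~\ref{lem:dim-multipler}. Your Perron--Frobenius derivation is essentially the standard textbook proof of those cited results, and the details check out: irreducibility does transfer between $\vA = s\vI - \vB$ and $\vB$ because the off-diagonal zero pattern is preserved; bordering $\vB_J$ by zero rows and columns does give $\rho(\vB') = \rho(\vB_J)$; and the load-bearing step, strict monotonicity $\rho(\vB') < \rho(\vB)$ for $0 \leq \vB' \leq \vB$ with $\vB' \neq \vB$ and $\vB$ irreducible, is a genuine Perron--Frobenius fact (it would fail without irreducibility, as you correctly flag). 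Two minor points: the claim that zeroing out the rows and columns indexed by $I$ changes $\vB$ implicitly needs $p \geq 2$ (each row of an irreducible matrix on $p \geq 2$ states has a nonzero off-diagonal entry); for $p = 1$ both claims are vacuous or trivial, since the only proper principal submatrix is empty and the unique singular irreducible $1 \times 1$ M-matrix is $[0]$, of rank $0 = p-1$. Also, for the rank statement you only need geometric simplicity of the Perron root, which the algebraic simplicity you invoke certainly supplies. Your closing suggestion---that the lemma may simply be cited rather than reproved---is precisely the route the paper takes; your write-up additionally makes the paper self-contained, at the modest cost of taking the Perron--Frobenius theorem itself on faith instead.
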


For each $S \in \binom{\cN}{r}$ and $j \in S$, define the vector
\begin{align*}
  \vh_{j,S}
  & \ := \
  (\pi(j,S \mid k))_{k \in \cN}
  \,.
\end{align*}
For each $i \in \cN$, the collection of the vectors $\{ \vh_{j,S} : S \not\ni i
\wedge j \in S \}$ provide the left-hand side coefficients in
Equation~\eqref{eq:linsys-rho} for $\hat\vrho_i$.
We'll show that the span of these vectors (in fact, a particular subset of them)
has dimension at least $n-1$.
This is sufficient to conclude that $\vrho_i$ is the unique solution to the
system of equations in Equation~\eqref{eq:linsys-rho} because it has at most
$n-1$ unknown variables, and it is clear that $\vrho_i$ satisfies the system of
equations.
(In fact, there are really only $n-2$ unknown variables, because we can force
$\hat\rho_{i,i} = 0$ and $\hat\rho_{i,n} = 1 - \sum_{k=1}^{n-1}
\hat\rho_{i,k}$.)
For the same reason, it is also sufficient to conclude that $\vlambda$ is the
unique solution to the system of equations in Equation~\eqref{eq:linsys-lambda}
(where, in fact, we may use all vectors $\{ \vh_{j,S} : S \in \binom{\cN}{r}
\wedge j \in S \}$).

\subsubsection{Rank of linear equations from a single assortment}

We begin by characterizing the space spanned by $\{ \vh_{j,S} : j \in S \}$ for
a fixed $S \in \binom{\cN}{r}$.
We claim, by Lemma~\ref{lem:ident}, that the vectors in $\{ \vh_{j,S} : j \in S
\}$ are linearly independent.
Indeed, if this collection of vectors is arranged in a matrix $[ \vh_{j,S} : j
\in S ]$, then the submatrix obtained by selecting rows corresponding to $j \in
S$ is the $S \times S$ identity matrix.
Thus we have proved
\begin{lemma}
  \label{lem:dim}
  For any $S \in \binom{\cN}{r}$,
  $\dim\del{\Span\{ \vh_{j,S} : j \in S \}} = |S| = r$.
\end{lemma}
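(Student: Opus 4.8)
The plan is to prove the stronger statement that the $r = |S|$ vectors $\{\vh_{j,S} : j \in S\}$ are linearly independent. Linear independence gives $\dim\del{\Span\{\vh_{j,S} : j \in S\}} \ge r$, while the reverse bound $\le r$ holds automatically because a span of $r$ vectors has dimension at most $r$; together these pin the dimension to exactly $r = |S|$. Thus all the content lies in establishing linear independence.

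To do this, I would collect the vectors as the columns of the $n \times r$ matrix $\vM := [\vh_{j,S} : j \in S]$, whose $(k,j)$ entry is $\pi(j,S \mid k)$ for $k \in \cN$ and $j \in S$. It then suffices to exhibit a single nonsingular $r \times r$ submatrix of $\vM$, since this forces $\vM$ to have full column rank $r$, and hence forces its columns to be linearly independent.

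The submatrix I would use is the one indexed by the rows $k \in S$, namely $(\pi(j,S \mid k))_{k,j \in S}$. Here I would apply Lemma~\ref{lem:ident} entrywise: for $k,j \in S$ the conditioning state $k$ always lies in $S_+$, so only the first and third cases of that lemma are relevant. The $i = j$ case gives $\pi(j,S \mid k) = 1$ when $k = j$, and the $i \in S_+ \setminus \{j\}$ case gives $\pi(j,S \mid k) = 0$ when $k \in S$ with $k \ne j$. Hence this restricted submatrix is exactly the $r \times r$ identity matrix, which is nonsingular, completing the argument.

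I do not anticipate a genuine obstacle here: the proof is essentially a matter of reading off the boundary cases of Lemma~\ref{lem:ident}. The one point meriting attention is confirming that every pair $(k,j)$ with $k,j \in S$ indeed falls under the $i = j$ or the $i \in S_+ \setminus \{j\}$ case---never the ratio case $i \in \cN \setminus S$---so that the identity structure of the restricted submatrix holds without exception and no entry is left undetermined.
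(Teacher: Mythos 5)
Your proposal is correct and is essentially the paper's own argument: both arrange the vectors $\{\vh_{j,S} : j \in S\}$ as columns of a matrix and observe, via the $i = j$ and $i \in S_+ \setminus \{j\}$ cases of Lemma~\ref{lem:ident}, that the submatrix of rows indexed by $S$ is the $r \times r$ identity, forcing linear independence. Your added check that no pair $(k,j)$ with $k,j \in S$ falls into the ratio case of Lemma~\ref{lem:ident} is a careful touch the paper leaves implicit, but it changes nothing substantive.
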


Note that in the case $r = n-1$, we are done.
But when $r < n-1$, the linear equations given by the $\{ \vh_{j,S} : j \in S
\}$ may not uniquely determine the $\vrho_i$ for $i \in \cN \setminus S$.
To overcome this, we need to be able to combine linear equations derived from
multiple assortments.
However, for a sum of subspaces $V$ and $W$,
\begin{align*}
  \dim (V + W)
  & \ \neq \
  \dim(V) + \dim(W)
\end{align*}
unless $V$ and $W$ are orthogonal.
In our case, the subspaces $\Span\{ \vh_{j,S} : j \in S \}$ and $\Span\{
  \vh_{j,S'} : j \in S' \}$ for different assortments $S$ and $S'$ are
\emph{not} necessarily orthogonal (even if $S$ and $S'$ are disjoint).
So a different argument is needed.

\subsubsection{Rank of linear equations from multiple assortments}

Our aim is to show that the intersection of subspaces $V := \Span\{ \vh_{j,S} :
j \in S \} \cap \Span\{ \vh_{j,S'} : j \in S' \}$ for different assortments $S$
and $S'$ cannot have high dimension.
We do this by showing that the intersection is orthogonal to a subspace of high
dimension.

For each $i \in \cN$, let $\va_i$ denote the $i$-th row of the matrix $\vA :=
\vI - \vrho$ (which is an M-matrix).
That is, $\va_i := \ve_i - \vrho_i$, where $\ve_i$ is the $i$-th coordinate
basis vector.
Recall that if $i \in \cN \setminus S$, then $\vrho_i$ satisfies
Equation~\eqref{eq:linsys-rho}.
This fact can be written in our new notation as
\begin{align*}
  \vh_{j,S}^\T\ve_i - \vh_{j,s}^\T\vrho_i
  & \ = \
  \va_i^\T\vh_{j,S}
  \ = \ 0
  \,,
  \quad j \in S
  \,.
\end{align*}
In other words,
\begin{lemma}
  \label{lem:perp}
  For any $S \in \binom{\cN}{r}$,
  $\Span\{ \vh_{j,S} : j \in S \} \perp \Span\{ \va_i : i \in \cN \setminus S
  \}$.
\end{lemma}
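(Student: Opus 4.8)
The plan is to establish the orthogonality directly at the level of the spanning vectors. Because the inner product is bilinear, two subspaces are orthogonal exactly when every generator of the first is orthogonal to every generator of the second; hence it suffices to show that $\va_i^\T \vh_{j,S} = 0$ for every $i \in \cN \setminus S$ and every $j \in S$. I would fix such a pair $(i,j)$ and expand the inner product using the definition $\va_i = \ve_i - \vrho_i$.

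Carrying out the expansion gives
\[
  \va_i^\T \vh_{j,S}
  \ = \
  \ve_i^\T \vh_{j,S} - \vrho_i^\T \vh_{j,S}
  \ = \
  \pi(j,S \mid i) - \sum_{k \in \cN} \rho_{i,k}\, \pi(j,S \mid k),
\]
where the first term is simply the $i$-th coordinate of $\vh_{j,S}$. This difference vanishes precisely because the true transition row $\vrho_i$ satisfies the linear system in Equation~\eqref{eq:linsys-rho}, which for $i \notin S$ and $j \in S$ reads $\sum_{k \in \cN} \pi(j,S \mid k)\, \rho_{i,k} = \pi(j,S \mid i)$. Substituting this identity collapses the right-hand side to zero, giving the claimed orthogonality of each pair of generators and therefore of the two spans.

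The only point requiring real care is the justification that the true parameters satisfy Equation~\eqref{eq:linsys-rho}, i.e.\ the identity $\pi(j,S \mid i) = \sum_{k \in \cN} \rho_{i,k}\, \pi(j,S \mid k)$ for $i \in \cN \setminus S$. I would argue this by a \emph{first-step} conditioning on the initial transition: since $i \notin S$, state $i$ is not absorbing in the chain modified for assortment $S$, so from $i$ the process makes a genuine move to some $k$ with probability $\rho_{i,k}$, and by the Markov property the continuation probability of reaching $j$ before $S \setminus \{j\}$ from $k$ is exactly $\pi(j,S \mid k)$. The decomposition automatically handles the boundary cases $k \in S$, where $\pi(j,S \mid k)$ equals $1$ if $k = j$ and $0$ otherwise, so no separate casework is needed.

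I do not expect a genuine obstacle here: the excerpt already records this first-step identity as a known property of the true parameters, so the lemma is essentially a restatement of Equation~\eqref{eq:linsys-rho} in the language of orthogonal complements, and the verification reduces to the one-line inner-product computation above.
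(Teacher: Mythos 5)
Your proposal is correct and takes essentially the same route as the paper: the paper also proves the lemma by observing that the true row $\vrho_i$ satisfies Equation~\eqref{eq:linsys-rho} for every $S \not\ni i$ (a fact it declares ``clear,'' resting on exactly the first-step/Markov-property conditioning you spell out) and then rewriting that identity as $\va_i^\T \vh_{j,S} = 0$ for each generator pair, which is the orthogonality of the two spans. The only difference is that you make explicit the one-step decomposition $\pi(j,S \mid i) = \sum_{k} \rho_{i,k}\,\pi(j,S \mid k)$, including the boundary cases $k \in S$, which the paper leaves implicit.
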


Now consider two assortments $S$ and $S'$, and the intersection of their
respective subspaces.
It follows from Lemma~\ref{lem:perp} that
\begin{align*}
  \Span\{ \vh_{j,S} : j \in S \} \cap \Span\{ \vh_{j,S'} : j \in S' \}
  & \ \perp \
  \Span\{ \va_i : i \in \cN \setminus (S \cap S') \}
  \,.
\end{align*}
This orthogonality is the key to lower-bounding the dimension of the sum of
these subspaces, which we capture in the following general lemma.
\begin{lemma}
  \label{lem:dim-multiple}
  Let $\scS$ be a family of subsets of $\cN$, $S'$ be a subset of $\cN$,
  and $\scS' := \scS \cup \{ S' \}$.
  Define the subspaces
  \begin{align*}
    V_{\scS} & \ := \ \Span\{ \vh_{j,S} : S \in \scS , j \in S \} \,, \\
    V_{S'} & \ := \ \Span\{ \vh_{j,S'} : j \in S' \} \,, \\
    V_{\scS'} & \ := \ V_{\scS} + V_{S'} \,.
  \end{align*}
  Then
  \begin{align*}
    \dim\del{ V_{\scS'} }
    & \ \geq \
    \dim\del{ V_{\scS} }
    + |S'|
    - \max\cbr{ 1 ,\, \abs{\del{\textstyle\bigcup_{S \in \scS} S} \cap S'} }
    \,.
  \end{align*}
\end{lemma}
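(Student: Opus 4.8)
The plan is to apply the dimension formula for the sum of two subspaces and then control the dimension of their intersection. Writing $V_{\scS'} = V_{\scS} + V_{S'}$, we have
\begin{align*}
  \dim(V_{\scS'}) = \dim(V_{\scS}) + \dim(V_{S'}) - \dim(V_{\scS} \cap V_{S'}) \,.
\end{align*}
By Lemma~\ref{lem:dim}, $\dim(V_{S'}) = |S'|$, so it suffices to establish the upper bound $\dim(V_{\scS} \cap V_{S'}) \leq \max\{1, |(\bigcup_{S \in \scS} S) \cap S'|\}$, after which the claimed inequality follows immediately by substitution.

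To bound the intersection I would exhibit a large subspace orthogonal to it, then argue that the intersection lies in that subspace's orthogonal complement. Set $U := \bigcup_{S \in \scS} S$ and fix $v \in V_{\scS} \cap V_{S'}$. Since $v \in V_{S'}$, Lemma~\ref{lem:perp} gives $v \perp \va_i$ for every $i \in \cN \setminus S'$. Since $v \in V_{\scS}$, write $v = \sum_{S \in \scS} v_S$ with each $v_S \in \Span\{\vh_{j,S} : j \in S\}$; by Lemma~\ref{lem:perp}, $v_S \perp \va_i$ whenever $i \in \cN \setminus S$, so $v \perp \va_i$ for every index lying outside all of the $S \in \scS$, that is, for $i \in \cN \setminus U$. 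Combining the two conditions, $v \perp \va_i$ for all $i \in (\cN \setminus S') \cup (\cN \setminus U) = \cN \setminus (S' \cap U)$, whence $V_{\scS} \cap V_{S'} \perp \Span\{\va_i : i \in \cN \setminus (S' \cap U)\}$.

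The remaining work is to lower-bound $\dim(\Span\{\va_i : i \in I\})$ for $I := \cN \setminus (S' \cap U)$, which I would do through the M-matrix structure of $\vA = \vI - \vrho$. If $S' \cap U \neq \emptyset$, then $I$ is a proper subset of $\cN$, so the principal submatrix $\vA_{I,I}$ is non-singular by Lemma~\ref{lem:m-matrix}; as $\vA_{I,I}$ arises from the rows $\{\va_i : i \in I\}$ by deleting the columns outside $I$, these $|I|$ vectors are linearly independent and $\dim(\Span\{\va_i : i \in I\}) = |I| = n - |S' \cap U|$. If instead $S' \cap U = \emptyset$, then $I = \cN$ and, since $\vA$ is a singular irreducible M-matrix, it has rank $n-1$ by Lemma~\ref{lem:m-matrix}, so the span has dimension $n-1$. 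Because the intersection lies in the orthogonal complement of this span, the first case gives $\dim(V_{\scS} \cap V_{S'}) \leq n - (n - |S' \cap U|) = |S' \cap U|$ and the second gives $\dim(V_{\scS} \cap V_{S'}) \leq n - (n-1) = 1$; both are subsumed by $\dim(V_{\scS} \cap V_{S'}) \leq \max\{1, |S' \cap U|\}$, completing the proof.

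The step I expect to be the main obstacle is the orthogonality argument for $V_{\scS}$: in contrast to the single-assortment case of Lemma~\ref{lem:perp}, a vector in the \emph{sum} $V_{\scS}$ is only guaranteed orthogonal to those $\va_i$ whose index $i$ lies outside \emph{every} assortment in $\scS$, which is exactly why the union $U = \bigcup_{S \in \scS} S$ (rather than any single $S$) governs the bound. Tracking which indices survive the intersection $\cN \setminus (S' \cap U)$, and treating the boundary case $S' \cap U = \emptyset$ via the rank-$(n-1)$ property of the singular M-matrix, is where the argument must be handled carefully.
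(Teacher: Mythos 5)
Your proof is correct and takes essentially the same route as the paper's: the dimension formula for $V_{\scS} + V_{S'}$, orthogonality of $V_{\scS} \cap V_{S'}$ to $\Span\{\va_i : i \in \cN \setminus (U \cap S')\}$ via Lemma~\ref{lem:perp}, and Lemma~\ref{lem:m-matrix} to pin down the rank of that span in the two cases $U \cap S' \neq \emptyset$ and $U \cap S' = \emptyset$. If anything, you are slightly more explicit than the paper at two points---decomposing $v \in V_{\scS}$ as a sum over assortments to justify the orthogonality, and deducing linear independence of the rows $\{\va_i : i \in I\}$ from the non-singularity of the principal submatrix---but the argument is the same.
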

\begin{proof}
  Let $S_0 := \bigcup_{S \in \scS} S$.
  Fix any $\vv \in V_{\scS} \cap V_{S'}$.
  Then, by Lemma~\ref{lem:perp}, $\va_i^\T\vv = 0$ for all $i \in (\cN \setminus
  S_0) \cup (\cN \setminus S') = \cN \setminus (S_0 \cap S')$.
  In other words,
  \begin{align*}
    V_{\scS} \cap V_{S'}
    & \ \perp \
    W
    \,,
  \end{align*}
  where $W := \Span\{ \va_i : i \in \cN \setminus (S_0 \cap S') \}$, and
  \begin{align*}
    \dim\del{V_{\scS} \cap V_{S'}}
    & \ \leq \
    \dim(W^\perp)
    \ = \
    n - \dim(W)
    \,.
  \end{align*}
  To determine $\dim(W)$, observe that $W$ is the span of rows of certain rows
  of the M-matrix $\vA$.
  By Lemma~\ref{lem:m-matrix}, the principal submatrix of $\vA$ corresponding to
  $\cN \setminus (S_0 \cap S')$ is either non-singular (when $S_0 \cap S' \neq
  \emptyset$) or is $\vA$ itself; in either case, it has rank $n - \max\{ 1,
  |S_0 \cap S'| \}$.
  Hence,
  \begin{align*}
    \dim(W)
    & \ = \
    n - \max\{ 1, |S_0 \cap S'| \}
  \end{align*}
  as well.
  Combining the dimension formula with the last two equation displays gives
  \begin{align*}
    \dim(V_{\scS'})
    & \ = \
    \dim(V_{\scS} + V_{S'})
    \\
    & \ = \
    \dim(V_{\scS}) + \dim(V_{S'}) - \dim(V_{\scS'} \cap V_{S'})
    \\
    & \ \geq \
    \dim(V_{\scS}) + \dim(V_{S'}) - \max\{ 1 ,\, |S_0 \cap S'| \}
    \,.
  \end{align*}
  The claim now follows from Lemma~\ref{lem:dim}.
\end{proof}

\subsubsection{Choice of assortments}
\label{sec:choice-of-assortments}

We now choose a collection of assortments and argue, via Lemma~\ref{lem:dim} and
Lemma~\ref{lem:dim-multiple}, that they define linear equations of sufficiently
high rank.
Specifically, for each $i \in \cN$, we need a collection $\scS \subset
\binom{\cN}{r}$ such that each $S \in \scS$ does not contain $i$, and
\begin{align}
  \dim\del{
    \Span\bigcup_{S \in \scS} \{ \vh_{j,S} : j \in S \}
  }
  & \ \geq \
  n-1
  \,.
  \label{eq:dim-bound}
\end{align}

\begin{lemma}
  \label{lem:dim-common-intersect}
  Suppose the assortments $S_1, S_2, \dotsc, S_T \in \binom{\cN}{r}$ have a
  pairwise common intersection $S_{\cap} = S_t \cap S_{t'}$ for all $t \neq t'$,
  and $|S_{\cap}| = r-1$.
  Then $\dim\del[0]{
    \Span\bigcup_{t=1}^T \{ \vh_{j,S_t} : j \in S_t \}
  } \geq T + r - 1$.
\end{lemma}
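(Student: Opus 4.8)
The plan is to exploit the ``sunflower'' structure forced by the hypothesis and then apply Lemma~\ref{lem:dim-multiple} one assortment at a time. First I would record the combinatorial consequence of the hypothesis: since $S_\cap \subseteq S_t$ and $|S_t| = r = |S_\cap| + 1$, each assortment has the form $S_t = S_\cap \cup \{x_t\}$ for a unique element $x_t \in \cN \setminus S_\cap$. The requirement that $S_t \cap S_{t'} = S_\cap$ for every $t \neq t'$ forces the petals $x_1, \dotsc, x_T$ to be \emph{distinct}: if $x_t = x_{t'}$ then $S_t = S_{t'}$ and their intersection would have size $r$, not $r-1$. This distinctness is the structural fact that drives the whole argument.

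Next I would argue by induction on $T$, building the family up as $\scS_t := \{S_1, \dotsc, S_t\}$ and writing $V_{\scS_t}$ for the corresponding span as in Lemma~\ref{lem:dim-multiple}, then invoking that lemma with $\scS = \scS_t$ and $S' = S_{t+1}$. The base case $T = 1$ is exactly Lemma~\ref{lem:dim}, which gives $\dim(V_{\scS_1}) = r$. For the inductive step, the only quantity I need to control is the overlap $\big|\big(\bigcup_{s \le t} S_s\big) \cap S_{t+1}\big|$ appearing in the lemma. Here $\bigcup_{s \le t} S_s = S_\cap \cup \{x_1, \dotsc, x_t\}$, while $S_{t+1} = S_\cap \cup \{x_{t+1}\}$; since $x_{t+1}$ is distinct from the earlier petals and lies outside $S_\cap$, this intersection equals exactly $S_\cap$, of size $r-1$.

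Because $r \ge 2$, we have $\max\{1, r-1\} = r-1$, so Lemma~\ref{lem:dim-multiple} yields $\dim(V_{\scS_{t+1}}) \ge \dim(V_{\scS_t}) + r - (r-1) = \dim(V_{\scS_t}) + 1$. Each new petal therefore contributes at least one fresh dimension, and summing from the base case gives $\dim(V_{\scS_T}) \ge r + (T-1) = T + r - 1$, which is the claimed bound.

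The step I expect to be the crux is verifying that the overlap never grows beyond $S_\cap$. This is precisely where the pairwise-common-intersection hypothesis is indispensable: without it, the union $\bigcup_{s \le t} S_s$ could meet $S_{t+1}$ in more than $r-1$ elements, making the subtracted term in Lemma~\ref{lem:dim-multiple} larger than $r-1$ and degrading the per-step gain, possibly to zero. The sunflower structure guarantees that the common core $S_\cap$ is the entire overlap at every stage, keeping the marginal contribution of each assortment at exactly one.
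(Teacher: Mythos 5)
Your proof is correct and follows essentially the same route as the paper's: induction on the assortments, with the base case from Lemma~\ref{lem:dim} and the inductive step from Lemma~\ref{lem:dim-multiple} using $r \geq 2$ so that $\max\{1, r-1\} = r-1$ and each assortment adds one dimension. Your explicit verification that the union $\bigcup_{s \le t} S_s = S_\cap \cup \{x_1,\dotsc,x_t\}$ meets $S_{t+1}$ in exactly $S_\cap$ (via the distinctness of the petals $x_t$) is a detail the paper's proof leaves implicit, and it is exactly right.
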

\begin{proof}
  Let $d_\tau := \dim\del[0]{ \Span\bigcup_{t=1}^{\tau} \{ \vh_{j,S_t} : j \in
  S_t \} }$ for $\tau \in \{1,2,\dotsc,T\}$.
  By Lemma~\ref{lem:dim}, we know that $d_1 = r$.
  Now, assume $d_\tau \geq \tau + r - 1$, and use the fact $r\geq2$ and
  Lemma~\ref{lem:dim-multiple} to conclude that $d_{\tau+1} \geq d_\tau + r -
  (r-1) = d_\tau + 1 \geq \tau + r$.
  The claim now follows by induction.
\end{proof}

Fix any $i \in \cN$ and $S_\cap \in \binom{\cN\setminus\{i\}}{r-1}$, and observe
that $|\cN \setminus (S_\cap \cup \{i\})| = n - r$.
Consider the collection of size-$r$ assortments given by
\begin{equation}
  \scS
  \ := \
  \cbr{
    S_\cap \cup \{ k \} : k \in \cN \setminus (S_\cap \cup \{i\})
  }
  \,.
  \label{eq:assortment-choice}
\end{equation}
These assortments do not contain $i$, they have the common intersection
$S_\cap$, with $|S_\cap| = r-1$, and there are $n-r$ assortments in total.
So by Lemma~\ref{lem:dim-common-intersect}, the collection $\scS$ satisfies the
dimension bound in Equation~\eqref{eq:dim-bound}.

As was already argued in the proof strategy, this suffices to establish the
uniqueness of the $\vrho_i$ and $\vlambda$ as solutions to the respective
systems of linear equations in Equation~\eqref{eq:linsys-rho} and
Equation~\eqref{eq:linsys-lambda}.

This concludes the proof of Theorem~\ref{thm:main} without the ``no purchase''
option.
\EndProofParagraph

\subsection{The general case with the ``no purchase'' option}

We now consider the general case, where the ``no purchase'' option is present.
The main difference relative to the previous subsection is that $\vrho$ is no
longer irreducible, as the ``no purchase'' state $0$ is absorbing.
However, the submatrix $\tilde\vrho = (\rho_{i,j})_{(i,j) \in \cN \times \cN}$
\emph{is} irreducible, so $\vI - \tilde\vrho$ is an irreducible M-matrix.

The definition of $\vh_{j,S}$, for $S \subseteq \cN$ and $j \in S_+$, is now
taken to be
\begin{align*}
  \vh_{j,S}
  & \ := \
  (\pi(j,S \mid k))_{k \in \cN_+}
  \,.
\end{align*}
Because the indexing starts at $0$, we still define $\va_i$ to be the $i$-th row
of $\vA = \vI - \vrho$, so $\va_i = \ve_i - \vrho_i$.
(In particular, $\va_0$ is the all-zeros vector.)

With these definitions, we have the following analogue of Lemma~\ref{lem:dim}
and Lemma~\ref{lem:perp}:
\begin{lemma}
  \label{lem:dimperp}
  For any $S \in \binom{\cN}{r}$,
  \begin{align*}
    \dim(\Span\{\vh_{j,S} : j \in S_+\})
    & \ = \ |S_+| \ = \ r+1 \,,
    \\
    \Span\{ \vh_{j,S} : j \in S_+ \}
    & \ \perp \ \Span\{ \va_i : i \in \cN \setminus S \}
    \,.
  \end{align*}
\end{lemma}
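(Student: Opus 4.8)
The plan is to prove both assertions as direct analogues of Lemma~\ref{lem:dim} and Lemma~\ref{lem:perp}, where the only genuinely new ingredient is to check that enlarging the index set from $\cN$ to $\cN_+$ and the outcome set from $S$ to $S_+$ (both now containing the ``no purchase'' state $0$) introduces no difficulty.

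For the dimension claim, I would arrange the $r+1$ vectors $\{\vh_{j,S} : j \in S_+\}$ as the columns of a matrix with rows indexed by $k \in \cN_+$, and then restrict to the $(r+1)\times(r+1)$ submatrix formed by the rows $k \in S_+$. Its $(k,j)$ entry is $\pi(j,S \mid k)$ with $k,j \in S_+$, and Lemma~\ref{lem:ident}—which is stated for all $i,j \in S_+$ and thus already covers the state $0$—gives $\pi(j,S \mid k) = 1$ when $k = j$ and $\pi(j,S \mid k) = 0$ when $k \in S_+ \setminus \{j\}$. Hence this submatrix is the $S_+ \times S_+$ identity, the columns $\vh_{j,S}$ are linearly independent, and $\dim(\Span\{\vh_{j,S} : j \in S_+\}) = |S_+| = r+1$.

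For the orthogonality claim, I would fix $i \in \cN \setminus S$ and $j \in S_+$. Since $\va_i = \ve_i - \vrho_i$ with $\vrho_i = (\rho_{i,k})_{k \in \cN_+}$, a direct expansion gives
\[
  \va_i^\T \vh_{j,S}
  \ = \
  \pi(j,S \mid i) - \sum_{k \in \cN_+} \rho_{i,k}\, \pi(j,S \mid k) \,,
\]
so it suffices to establish the first-step identity $\pi(j,S \mid i) = \sum_{k \in \cN_+} \rho_{i,k}\, \pi(j,S \mid k)$. Because $i \in \cN \setminus S$ lies outside the absorbing set $S_+$, conditioning on the first transition (which lands in state $k$ with probability $\rho_{i,k}$) and invoking the Markov property yields exactly this identity; the terms $k = 0$ and $k = i$ are handled automatically, since $\pi(j,S \mid 0) \in \{0,1\}$ by Lemma~\ref{lem:ident} and $\rho_{i,i} = 0$. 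This is precisely the statement that the true $\vrho_i$ satisfies Equation~\eqref{eq:linsys-rho}, and it gives $\va_i^\T \vh_{j,S} = 0$, hence the claimed orthogonality.

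I expect no substantive obstacle, as the argument reuses the identity-submatrix reasoning and the first-step analysis from the no-purchase case. The only points deserving attention are purely bookkeeping around the state $0$: verifying that Lemma~\ref{lem:ident} genuinely covers $j = 0$ (so that the identity submatrix is truly $(r+1)$-dimensional), and noting that orthogonality is asserted only against $\{\va_i : i \in \cN \setminus S\}$, so the degenerate vector $\va_0$ (the all-zeros vector) is never invoked.
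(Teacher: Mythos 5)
Your proof is correct and is exactly the argument the paper intends: the paper states Lemma~\ref{lem:dimperp} without a separate proof, deferring to the arguments for Lemma~\ref{lem:dim} (the $S_+ \times S_+$ identity submatrix extracted via Lemma~\ref{lem:ident}) and Lemma~\ref{lem:perp} (the true $\vrho_i$ satisfies Equation~\eqref{eq:linsys-rho} by first-step analysis, giving $\va_i^\T \vh_{j,S} = 0$), which is precisely what you do. Your explicit bookkeeping around the state $0$---that Lemma~\ref{lem:ident} already covers $i,j \in S_+ \ni 0$ and that $\va_0$ is never invoked---is a faithful and slightly more careful rendering of the same approach.
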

Here, the key difference is that the dimension is $r+1$, rather than just $r$.

We now establish an analogue of Lemma~\ref{lem:dim-multiple} (which is
typographically nearly identical).
\begin{lemma}
  \label{lem:dim-multipler}
  Let $\scS$ be a family of subsets of $\cN$, $S'$ be a subset of $\cN$,
  and $\scS' := \scS \cup \{ S' \}$.
  Define the subspaces
  \begin{align*}
    V_{\scS} & \ := \ \Span\{ \vh_{j,S} : S \in \scS , j \in S_+ \} \,, \\
    V_{S'} & \ := \ \Span\{ \vh_{j,S'} : j \in S_+' \} \,, \\
    V_{\scS'} & \ := \ V_{\scS} + V_{S'} \,.
  \end{align*}
  Then
  \begin{align*}
    \dim\del{ V_{\scS'} }
    & \ \geq \
    \dim\del{ V_{\scS} }
    + |S'|
    - \max\cbr{ 1 ,\, \abs{\del{\textstyle\bigcup_{S \in \scS} S} \cap S'} }
    \,.
  \end{align*}
\end{lemma}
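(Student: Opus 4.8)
The plan is to follow the proof of Lemma~\ref{lem:dim-multiple} essentially verbatim, substituting Lemma~\ref{lem:dimperp} for Lemmas~\ref{lem:dim} and~\ref{lem:perp}, while tracking two bookkeeping changes caused by the ``no purchase'' state: the ambient space is now $\R^{\cN_+}$, of dimension $n+1$ rather than $n$, and the single-assortment subspace has dimension $\dim(V_{S'}) = |S_+'| = |S'|+1$ rather than $|S'|$. The pleasant surprise I expect is that these two extra units cancel, leaving the stated bound numerically unchanged.

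First I would set $S_0 := \bigcup_{S\in\scS} S \subseteq \cN$ and fix an arbitrary $\vv \in V_{\scS} \cap V_{S'}$. Applying the orthogonality half of Lemma~\ref{lem:dimperp} to every $S \in \scS$ and to $S'$ gives $\va_i^\T \vv = 0$ for all $i \in (\cN\setminus S_0)\cup(\cN\setminus S') = \cN\setminus(S_0\cap S')$, so that $V_{\scS}\cap V_{S'} \perp W$ with $W := \Span\{\va_i : i\in\cN\setminus(S_0\cap S')\}$, and hence $\dim(V_{\scS}\cap V_{S'}) \le (n+1)-\dim(W)$. Note that $0$ never belongs to the index set $\cN\setminus(S_0\cap S')$, so the degenerate all-zeros row $\va_0$ plays no role here.

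The hard part will be lower-bounding $\dim(W)$, since $\vA=\vI-\vrho$ is no longer irreducible and may in fact contain a singular irreducible block. My plan is to pass to the irreducible M-matrix $\tilde\vA = \vI - \tilde\vrho$ by projecting onto the coordinates indexed by $\cN$: writing $I := \cN\setminus(S_0\cap S')$, this projection sends each $\va_i$ ($i\in I$) to the $i$-th row $\tilde\va_i$ of $\tilde\vA$, and since projection cannot raise rank, $\dim(W) \ge \operatorname{rank}\{\tilde\va_i : i\in I\} \ge \operatorname{rank}(\tilde\vA[I,I])$, the last quantity being the rank of the principal submatrix on rows and columns $I$. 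Then I would invoke Lemma~\ref{lem:m-matrix} on the irreducible M-matrix $\tilde\vA$ in two cases: if $S_0\cap S'\ne\emptyset$, then $I\subsetneq\cN$ and $\tilde\vA[I,I]$ is a proper principal submatrix, hence non-singular with rank $|I| = n-|S_0\cap S'|$; if $S_0\cap S'=\emptyset$, then $\tilde\vA[I,I]=\tilde\vA$, whose rank is at least $n-1$ whether or not $\tilde\vA$ is singular. In both cases this yields $\dim(W) \ge n-\max\{1,|S_0\cap S'|\}$.

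Finally I would assemble the pieces. The last two bounds give $\dim(V_{\scS}\cap V_{S'}) \le 1+\max\{1,|S_0\cap S'|\}$, and feeding this into the identity $\dim(V_{\scS'}) = \dim(V_{\scS}) + \dim(V_{S'}) - \dim(V_{\scS}\cap V_{S'})$ together with $\dim(V_{S'}) = |S'|+1$ from Lemma~\ref{lem:dimperp} produces exactly $\dim(V_{\scS'}) \ge \dim(V_{\scS}) + |S'| - \max\{1,|S_0\cap S'|\}$; the ``$+1$'' from the enlarged ambient space and the ``$+1$'' in $\dim(V_{S'})$ cancel, as anticipated. I expect the genuine obstacle to be the rank argument of the previous paragraph: because $\vA$ is reducible one cannot directly quote non-singularity of a subset of its rows, so the bound must be routed through the principal-submatrix/projection chain, with the $S_0\cap S'=\emptyset$ case (where $\tilde\vA$ itself might be singular) handled by the weaker rank-$(n-1)$ guarantee.
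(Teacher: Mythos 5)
Your proposal is correct and takes essentially the same route as the paper's own proof: orthogonality of $V_{\scS} \cap V_{S'}$ to $W = \Span\{\va_i : i \in \cN \setminus (S_0 \cap S')\}$ via Lemma~\ref{lem:dimperp}, a rank bound on $W$ obtained from the relevant principal submatrix of the irreducible M-matrix $\vI - \tilde\vrho$ via Lemma~\ref{lem:m-matrix}, and the dimension formula with $\dim(V_{S'}) = |S'|+1$, the two extra units from the enlarged ambient space cancelling exactly as you anticipated. Your explicit projection-onto-$\cN$-coordinates step, and your observation that in the $S_0 \cap S' = \emptyset$ case one only gets rank at least $n-1$ (since $\vI - \tilde\vrho$ need not be singular), are in fact slightly more careful than the paper's terse equality claim, but the argument is the same.
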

\begin{proof}
  The proof is nearly the same as that of Lemma~\ref{lem:dim-multiple}.
  Define $S_0 := \bigcup_{S \in \scS} S$ and take $\vv \in V_{\scS} \cap
  V_{S'}$.
  By Lemma~\ref{lem:dimperp}, $V_{\scS} \cap V_{S'} \perp W$, where $W :=
  \Span\{ \va_i : i \in \cN \setminus (S_0 \cap S') \}$, and
  \begin{align*}
    \dim(V_{\scS} \cap V_{S'})
    & \ \leq \ \dim(W^\perp) \ = \ n+1 - \dim(W)
    \,.
  \end{align*}
  We now use the fact that $\vI - \tilde\vrho$, which is a submatrix of $\vA$,
  is an irreducible M-matrix.
  By Lemma~\ref{lem:m-matrix}, the principal submatrix of $\vA$ corresponding to
  $\cN \setminus (S_0 \cap S')$ is either non-singular (when $S_0 \cap S' \neq
  \emptyset$) or is $\vI - \tilde\vrho$; in either case, it has rank $n - \max\{
    1, |S_0 \cap S'| \}$.
  So we have
  \begin{align*}
    \dim(W)
    & \ = \
    n - \max\{ 1, |S_0 \cap S'| \}
    \quad
    \text{and}
    \quad
    \dim(W^\perp)
    \ = \
    1 + \max\{ 1, |S_0 \cap S'| \}
    \,.
  \end{align*}
  Finishing the proof as in Lemma~\ref{lem:dim-multiple}, we have
  \begin{align*}
    \dim(V_{\scS'})
    & \ \geq \
    \dim(V_{\scS}) + \dim(V_{S'}) - 1 - \max\{ 1 ,\, |S_0 \cap S'| \}
    \\
    & \ \geq \
    \dim(V_{\scS}) + r - \max\{ 1 ,\, |S_0 \cap S'| \}
  \end{align*}
  where the second inequality uses Lemma~\ref{lem:dimperp} (instead of
  Lemma~\ref{lem:dim}).
\end{proof}

The choice of assortments demonstrating the subspace of required dimension is
the same as before, except now we show that the dimension is at least $n$.
Again, fix some $i \in \cN$, and choose the collection of $n-r$ assortments
$\scS \subseteq \binom{\cN}{r}$ as before (described in and directly before
Equation~\eqref{eq:assortment-choice}).
Following the inductive argument in the proof of
Lemma~\ref{lem:dim-common-intersect}, but now using Lemma~\ref{lem:dimperp} and
Lemma~\ref{lem:dim-multipler} (instead of Lemma~\ref{lem:dim} and
Lemma~\ref{lem:dim-multiple}), we have
\begin{align*}
  \dim\del{
    \Span\bigcup_{S \in \scS} \{ \vh_{j,S} : j \in S \}
  }
  & \ \geq \
  (r + 1) + (n-r-1) \cdot (r - \max\{ 1, r-1 \})
  \ = \
  n
  \,.
\end{align*}
Since each of the systems of linear equations from
Equation~\eqref{eq:linsys-rho} and Equation~\eqref{eq:linsys-lambda} have (at
most) $n$ unknown variables, we conclude that the $\vrho_i$ and $\vlambda$ are
unique as solutions to their respective systems of linear equations.

This concludes the proof of Theorem~\ref{thm:main}.
\EndProofParagraph

\subsection{Total number of assortments required}

We now show that the number of assortments for which we need the choice
probabilities is $O(n^2)$ for $r \leq n/2$.
Indeed, the construction given above based on
Lemma~\ref{lem:dim-common-intersect} can be used to avoid using all assortments
of size $r$ (and $r+1$) in Algorithm~\ref{alg:main}.

We choose two sets $S_\cap, S_\cap' \in \binom{\cN}{r-1}$, which shall serve as
``common intersection sets'' (in the sense used in
Section~\ref{sec:choice-of-assortments}), as follows.
The first set $S_\cap \in \binom{\cN}{r-1}$ is chosen arbitrarily; it serves as
the common intersection set for all $i \in \cN \setminus S_\cap$.
The second set $S_\cap' \in \binom{\cN \setminus S_\cap}{r-1}$ is chosen
arbitrarily as long as it is disjoint from $S_\cap$ (which is possible because
$r \leq n/2$); it serves as the common intersection set for $i \in S_\cap$.

For each $i \in \cN \setminus S_\cap$, we need the equations for the
assortments $S_\cap \cup \{k\}$ for all $k \in \cN \setminus (S_\cap \cup
\{i\})$.
Obtaining the equations for one such $S_\cap \cup \{k\}$ requires choice
probabilities for assortments $S_\cap \cup \{k\}$ and $S_\cap \cup \{k\} \cup
\{j\}$ for $j \in \cN \setminus (S_\cap \cup \{k\})$ as per Lemma~\ref{lem:ident}.
In total, for all $i \in \cN \setminus S_\cap$, we need choice probabilities for
$O(n^2)$ assortments.
For the remaining $i \in S_\cap$, we use the same argument for the disjoint
common intersection set $S_\cap'$, and thus require the choice probabilities for
at most another $O(n^2)$ assortments.

\section{Discussion}
\label{sec:discussion}

Our main result establishes the identifiability of MCCM parameters from choice
probabilities for assortments of sizes different from $n-1$ and $n$.
This is important because real systems often have cardinality constraints on the
assortment sizes.
While such constraints are typically considered in the context of assortment
optimization~\citep[see, e.g.,][]{desir2015capacity}, it is also important in
the context of parameter estimation.

One complication of using small size assortments to estimate the MCCM parameters
is that the number of different assortments required may be as large as
$O(n^2)$.
In contrast, only $n+1$ assortments are needed when the sizes are $n-1$ and $n$.
On the other hand, the statistical difficulty of estimating choice probabilities
for large assortments may be higher than the same task for smaller assortments.
So the possible trade-offs in sample complexity is not straightforward from
this analysis.
This is an interesting question that we leave to future work.

\section*{Acknowledgments}

We are grateful to Shipra Agrawal and Vineet Goyal for helpful discussions, and
to Vineet for originally suggesting this problem.
This work was supported in part by NSF awards DMR-1534910 and IIS-1563785, a
Bloomberg Data Science Research Grant, a Sloan Research Fellowship, and the
Research Opportunities and Approaches to Data Science (ROADS) grant from the
Data Science Institute at Columbia University.

\bibliographystyle{plainnat}
\bibliography{paper}

\end{document}